\let\OLDthebibliography\thebibliography
\renewcommand\thebibliography[1]{
  \OLDthebibliography{#1}
  \setlength{\parskip}{0pt}
  \setlength{\itemsep}{0pt plus 0.3ex}
}
\newtheorem{thm}{Theorem}[section]
\newtheorem{lemma}[thm]{Lemma}
\newtheorem{cor}[thm]{Corollary}
\theoremstyle{definition}
\newtheorem{defn}[thm]{Definition}
\theoremstyle{remark}
\numberwithin{equation}{section}
\newcommand*\wrapletters[1]{\wr@pletters#1\@nil}
\def\wr@pletters#1#2\@nil{#1\allowbreak\if&#2&\else\wr@pletters#2\@nil\fi}
 \def\Del{{\Delta}}
\def\eps{\varepsilon}
\def\le{\leqslant} \def\ge{\geqslant}
\def \bbA {\mathbb A}
\def \bbF {\mathbb F}
\def \bN {\mathbb N}
\def \bR {\mathbb R}
\def \bZ {\mathbb Z}
\def \bx {\mathbf x}
\def \by {\mathbf y}
\def \bzero {\mathbf 0}
\def \fl {\mathfrak l}
\def \fp {\mathfrak p}
\def \fq {\mathfrak q}
\def \fL {\mathfrak L}
\def \cG {\mathcal G}
\def \cL {\mathcal L}
\def \cP {\mathcal P}
\def \cQ {\mathcal Q}
\def \cX {\mathcal X}
\def \deg {\mathrm{deg}}
\begin{document}
\title[Incidence geometry and polynomial expansion]
{Incidence geometry and polynomial expansion over finite fields}
\author[Nuno Arala \and Sam Chow]{Nuno Arala \and Sam Chow}
\address{Mathematics Institute, Zeeman Building, University of Warwick, Coventry CV4 7AL}
\email{Nuno.Arala-Santos@warwick.ac.uk}
\address{Mathematics Institute, Zeeman Building, University of Warwick, Coventry CV4 7AL}
\email{Sam.Chow@warwick.ac.uk}
\subjclass[2020]{11T06 (primary); 11B30, 11G25, 51B05 (secondary)}
\keywords{Incidence geometry, polynomial expansion, finite fields, spectral theory, algebraic varieties}
\thanks{}
\date{}

\begin{abstract} 
We use spectral theory and algebraic geometry to establish a higher-degree analogue of a Szemer\'edi--Trotter-type theorem over finite fields, with an application to polynomial expansion.
\end{abstract}

\maketitle

\section{Introduction}

\subsection{Incidence geometry}

The Szemer\'edi--Trotter theorem asserts that if $\cP$ is a set of points and $\cL$ is a set of lines in $\bR^2$ then
\[
\# \{ (\fp, \fl) \in \cP \times \cL: \fp \in \fL \}
\ll |\cP|^{2/3} |\cL|^{2/3} + |\cP| + |\cL|.
\]
Such estimates have implications for the sum--product problem of Erd\H{o}s and Szemer\'edi, see \cite{Ele1997}. Vinh  established the following finite-field analogue.

\begin{thm} [\cite{Vinh1}] Let $\cP$ be a set of points and $\cL$ a set of lines in $\bbF_q^2$. Then
\[
\left| 
\# \{ (\fp, \fl) \in \cP \times \cL: \fp \in \fL \}
- \frac{|\cP| \cdot |\cL|}{q} \right|
\le
q^{1/2} \sqrt{|\cP| \cdot |\cL|}.
\]
\end{thm}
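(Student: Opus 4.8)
\emph{Proof strategy.} The plan is to recast the incidence count as a bilinear form in indicator vectors and to estimate it using the singular values of the full point--line incidence matrix. Let $M$ be the $\{0,1\}$-matrix with rows indexed by the $q^2$ points of $\bbF_q^2$ and columns indexed by the $q^2+q$ lines, where $M_{\fp,\fl}=1$ exactly when $\fp\in\fl$, and write $\mathbf 1_\cP$ and $\mathbf 1_\cL$ for the indicator column vectors of $\cP$ and $\cL$. Then
\[
\#\{(\fp,\fl)\in\cP\times\cL:\fp\in\fl\}=\mathbf 1_\cP^{\mathsf T}M\,\mathbf 1_\cL ,
\]
so it suffices to show this bilinear form lies within $q^{1/2}\sqrt{|\cP|\,|\cL|}$ of $|\cP|\,|\cL|/q$.

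The crucial step is the spectral analysis of $M$. Because two distinct points of $\bbF_q^2$ determine a unique common line while each point lies on $q+1$ lines, we have the identity $MM^{\mathsf T}=qI+J$, where $J$ is the $q^2\times q^2$ all-ones matrix. Since $J$ has eigenvalues $q^2$ (simple) and $0$, it follows that $MM^{\mathsf T}$ has eigenvalue $q^2+q$ with multiplicity one and eigenvalue $q$ with multiplicity $q^2-1$; equivalently, $M$ has a single largest singular value $\sqrt{q^2+q}$ and all remaining singular values equal to $\sqrt q$. One checks directly that $M\mathbf 1_{\mathrm{lines}}=(q+1)\mathbf 1_{\mathrm{points}}$ and $M^{\mathsf T}\mathbf 1_{\mathrm{points}}=q\,\mathbf 1_{\mathrm{lines}}$, so the extremal singular directions are the all-ones vectors on points and on lines.

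With this in hand, I would run the expander mixing argument. Decompose $\mathbf 1_\cP=\tfrac{|\cP|}{q^2}\mathbf 1_{\mathrm{points}}+w_1$ and $\mathbf 1_\cL=\tfrac{|\cL|}{q^2+q}\mathbf 1_{\mathrm{lines}}+w_2$, where $w_1\perp\mathbf 1_{\mathrm{points}}$ and $w_2\perp\mathbf 1_{\mathrm{lines}}$. The two cross terms vanish because the all-ones vectors are the top left and right singular vectors of $M$; the leading term equals $\tfrac{|\cP|}{q^2}\cdot\tfrac{|\cL|}{q^2+q}\cdot(q+1)q^2=|\cP|\,|\cL|/q$; and the remaining term obeys $|w_1^{\mathsf T}Mw_2|\le\sqrt q\,\|w_1\|\,\|w_2\|$ by the bound $\sqrt q$ on the non-extremal singular values. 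Since $\|w_1\|^2=|\cP|-|\cP|^2/q^2\le|\cP|$ and $\|w_2\|^2=|\cL|-|\cL|^2/(q^2+q)\le|\cL|$, this produces exactly the error term $q^{1/2}\sqrt{|\cP|\,|\cL|}$.

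The only place where something genuine must be said is the eigenvalue computation, and it is elementary: it reduces to the identity $MM^{\mathsf T}=qI+J$, after which everything is projection bookkeeping. It is worth flagging that this is precisely the template on which the higher-degree theorem of the paper will be built --- one replaces the family of lines by the appropriate family of algebraic varieties, and the elementary fact ``two points lie on a unique line'' gets replaced by an incidence count for that family, which is where algebraic geometry enters and where the real work lies.
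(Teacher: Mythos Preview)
Your argument is correct. Note, however, that the paper does not supply its own proof of this statement: it is quoted as background from Vinh's work and not re-derived, so there is no in-paper proof to compare against line by line.

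That said, your write-up is essentially Vinh's original proof, and it is exactly the template the paper adopts for its own Theorem~\ref{MainThm}. The only structural difference is cosmetic: you work with the rectangular point--line incidence matrix $M$ and its singular values via $MM^{\mathsf T}=qI+J$, whereas the paper, for its higher-degree result, builds a \emph{symmetric} $q$-regular graph $\cG_F$ on $\bbF_q^2$, bounds $\lambda_2(\cG_F)$ (Lemma~\ref{boundl2}), and then invokes the expander mixing lemma. Your identity $MM^{\mathsf T}=qI+J$ is the linear analogue of the paper's computation of $A^3$; as you correctly anticipate, the replacement of ``two points determine a unique line'' by a Weil-type count on the curves $C_{\mathbf a}$ is precisely where the new content of the paper lies.
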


In our previous paper, we bootstrapped Vinh's theorem to infer the following generalisation to higher degrees.

\begin{thm} [\cite{AC}] Let $n \in \bN$, and let $q > n$ be a prime power. Let $\cP$ be a set of points in $\bbF_q^2$, and $\cQ$ a collection of subsets of $\bbF_q^2$ of the shape
\[
\{ (x,y) \in \bbF_q^2:
y = a_n x^n + \cdots + a_0 \}.
\]
Then
\[
\left| 
\# \{ (\fp, \fl) \in \cP \times \cL: \fp \in \fL \}
- \frac{|\cP| \cdot |\cL|}{q} \right|
\le
q^{n/2} \sqrt{|\cP| \cdot |\cL|}.
\]
\end{thm}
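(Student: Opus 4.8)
The plan is to linearise the configuration by a Veronese-type lift, under which the degree-$n$ polynomial curves become hyperplanes, and then to prove and apply a point--hyperplane incidence bound via the spectral mechanism underlying Vinh's theorem. Write $\cL$ for the collection $\cQ$. Consider the injective map
\[
\phi \colon \bbF_q^2 \to \bbF_q^{n+1}, \qquad (x,y) \mapsto (x, x^2, \ldots, x^{n}, y),
\]
and observe that a point $(x,y)$ lies on the curve $\{ y = a_n x^n + \cdots + a_0 \}$ precisely when $\phi(x,y)$ lies on the affine hyperplane
\[
H_{\ba} = \{ (z_1, \ldots, z_{n+1}) \in \bbF_q^{n+1} : z_{n+1} = a_n z_n + a_{n-1} z_{n-1} + \cdots + a_1 z_1 + a_0 \}.
\]
Since $q > n$, a polynomial of degree at most $n$ is determined by the curve it cuts out, so assigning $H_{\ba}$ to $\{y = \sum a_i x^i\}$ gives a well-defined injection from $\cL$ to the set $\cH$ of such hyperplanes; hence $|\cH| = |\cL|$, $|\phi(\cP)| = |\cP|$, and as incidences are preserved exactly,
\[
\# \{ (\fp, \fl) \in \cP \times \cL : \fp \in \fl \} = \# \{ (\fp, H) \in \phi(\cP) \times \cH : \fp \in H \}.
\]
It thus suffices to bound the right-hand side, and the theorem follows from the case $d = n+1$ of the assertion that for every $\cP' \subseteq \bbF_q^d$ and every family $\cH$ of \emph{non-vertical} hyperplanes $\{ z_d = c_1 z_1 + \cdots + c_{d-1} z_{d-1} + c_0 \}$ one has
\[
\Big| \# \{ (\fp, H) \in \cP' \times \cH : \fp \in H \} - \frac{|\cP'| \cdot |\cH|}{q} \Big| \le q^{(d-1)/2} \sqrt{|\cP'| \cdot |\cH|}.
\]

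I would prove this point--hyperplane bound spectrally. Parametrise the non-vertical hyperplanes by $\bc = (c_1, \ldots, c_{d-1}, c_0) \in \bbF_q^d$ and let $M \in \{0,1\}^{q^d \times q^d}$ be the biadjacency matrix of the incidence graph between the $q^d$ points and these $q^d$ hyperplanes, which is $q^{d-1}$-biregular. A direct count shows that a pair of distinct points lies on $q^{d-2}$ common non-vertical hyperplanes when the two points differ in at least one of their first $d-1$ coordinates, and on none when they agree in all of them; hence
\[
M M^{\mathsf T} = q^{d-1} I + q^{d-2}\,(J - I - A),
\]
where $J$ is the all-ones matrix and $A$ is the adjacency matrix of the disjoint union of $q^{d-1}$ cliques $K_q$ obtained by joining distinct points that share their first $d-1$ coordinates. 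As $A$, $I$ and $J$ pairwise commute, $MM^{\mathsf T}$ is diagonalised alongside them, and its eigenvalues come out to $q^{2(d-1)}$ with multiplicity $1$, then $q^{d-1}$ with multiplicity $q^{d}-q^{d-1}$, and $0$ with multiplicity $q^{d-1}-1$. Consequently every singular value of $M$ apart from the Perron value $q^{d-1}$ is at most $q^{(d-1)/2}$, and the bipartite expander mixing lemma --- which in this normalisation carries \emph{no} implied constant --- gives exactly the displayed inequality. Specialising to $d = n+1$ and feeding in the reduction above produces the bound $q^{n/2} \sqrt{|\cP| \cdot |\cL|}$.

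The one genuinely delicate step is the spectral computation: verifying that the count of common non-vertical hyperplanes collapses to the clean two-case formula above, and that $q^{d-1} I + q^{d-2}(J - I - A)$ has the stated spectrum (this uses that $A$, being a disjoint union of complete graphs, has spectrum $\{q-1, -1\}$, and that it is simultaneously diagonalisable with $J$). Everything else is routine bookkeeping: injectivity of $\phi$, the exact matching of incidences, and the use of $q > n$ to make the correspondence $\cL \to \cH$ well defined. An alternative to the self-contained eigenvalue calculation would be to deduce the higher-dimensional point--hyperplane estimate from Vinh's theorem itself --- for instance by an induction on $d$ that slices $\bbF_q^d$ into parallel affine copies of $\bbF_q^{d-1}$ --- but the direct computation is cleaner and keeps the constant sharp at $1$.
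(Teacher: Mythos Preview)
The present paper does not prove this statement at all: it is quoted verbatim from the authors' earlier work \cite{AC} as background, with only the one-line description that they ``bootstrapped Vinh's theorem'' to obtain it. So there is no proof here to compare against directly.

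That said, your argument is correct and self-contained. The Veronese lift $\phi(x,y)=(x,x^2,\ldots,x^n,y)$ is injective, the condition $q>n$ guarantees that distinct coefficient tuples give distinct curves (hence distinct hyperplanes), and incidences transfer exactly. Your computation of $MM^{\mathsf T}$ is right: two distinct points lie on $q^{d-2}$ common non-vertical hyperplanes precisely when they differ in some coordinate among the first $d-1$, and on none otherwise, giving $MM^{\mathsf T}=q^{d-1}I+q^{d-2}(J-I-A)$ with $A$ the adjacency matrix of $q^{d-1}$ disjoint copies of $K_q$. Since $A$ and $J$ commute, the simultaneous diagonalisation yields the eigenvalues $q^{2(d-1)}$, $q^{d-1}$, $0$ with the multiplicities you state, and the bipartite expander mixing lemma then delivers the bound with constant exactly $1$.

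Your approach is almost certainly what \cite{AC} does, or very close to it: ``bootstrapping Vinh's theorem'' is a natural description of lifting to $\bbF_q^{n+1}$ and invoking the higher-dimensional point--hyperplane analogue. The only substantive choice you make is to re-derive the point--hyperplane bound from scratch spectrally rather than to quote it; this has the virtue of keeping the constant sharp and the argument elementary.
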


We had initially sought to generalise Vinh's result to sets
\[
\{ (x,y) \in \bbF_q^2:
y = (x + a)^n + b \},
\]
but this had eluded us. We now achieve this. Our results apply to symmetric polynomials, i.e. $F(a,x) = F(x,a)$.

\begin{thm}
\label{MainThm}
Let $F(a,x) \in \bbF_q[a,x]$ be a non-diagonal, symmetric polynomial such that $\deg(F) < \mathrm{char}(\bbF_q)$. Let $\cP$ be a set of points and $\cQ$ a collection of subsets of $\bbF_q^2$ of the shape
\begin{equation}
\label{qab}
\fq_{a,b} =
\{ (x,y) \in \bbF_q^2:
F(a,x) + b + y = 0 \}
\end{equation}
in $\bbF_q^2$. Then
\[
\# \{ (\fp, \fq) \in \cP \times \cQ:
\fp \in \fq \}
= \frac{ |\cP| \cdot |\cQ| }q +
O_{\deg(F)}(q^{5/6} \sqrt{|\cP| \cdot |\cQ|}).
\]
\end{thm}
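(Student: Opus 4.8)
The plan is to deduce the estimate from a spectral-gap bound for an auxiliary bipartite graph, via the expander mixing lemma. First pass from curves to parameters: for each $\fq \in \cQ$ fix a pair $(a,b) \in \bbF_q^2$ with $\fq = \fq_{a,b}$, obtaining $\widetilde{\cQ} \subseteq \bbF_q^2$ with $|\widetilde{\cQ}| = |\cQ|$, and let $\Gamma$ be the bipartite graph on $\bbF_q^2 \sqcup \bbF_q^2$ in which $(x,y)$ is adjacent to $(a,b)$ exactly when $F(a,x) + b + y = 0$. Each point lies on exactly $q$ of the sets $\fq_{a,b}$ (one per value of $a$, with $b$ then forced), and each $\fq_{a,b}$ is the graph of $x \mapsto -b - F(a,x)$ and so has exactly $q$ points; hence $\Gamma$ is $q$-regular on both sides, and the count sought equals the number of edges of $\Gamma$ between $\cP$ and $\widetilde{\cQ}$. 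The expander mixing lemma then bounds the difference appearing in the theorem by $\sigma \sqrt{|\cP| \cdot |\cQ|}$, where $\sigma$ is the second largest singular value of the biadjacency matrix $A$ of $\Gamma$, so it suffices to show $\sigma \ll_{\deg F} q^{5/6}$.

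To control $\sigma^2 = \lambda_2(AA^{\mathrm T})$, I would exploit the symmetry $(x,y) \mapsto (x, y+t)$ of the point side: it commutes with $AA^{\mathrm T}$, since the number of common neighbours of $(x_1,y_1)$ and $(x_2,y_2)$ is $\#\{a \in \bbF_q : F(a,x_1) - F(a,x_2) = y_2 - y_1\}$, which depends on $y_1,y_2$ only through $y_1 - y_2$. Decomposing functions on the point set according to the additive characters $y \mapsto \psi(sy)$, where $s \in \bbF_q$ and $\psi$ is a fixed non-trivial additive character, block-diagonalises $AA^{\mathrm T}$: on the $s$-block it acts with the same spectrum as the Hermitian positive semidefinite $q \times q$ matrix $B^{(s)}$ given by $B^{(s)}_{x_1,x_2} = \sum_{a \in \bbF_q} \psi\bigl(s(F(a,x_1) - F(a,x_2))\bigr)$. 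The block $s = 0$ is $q$ times the all-ones matrix and contributes only the trivial eigenvalue $q^2$; it remains to bound $\lambda_{\max}(B^{(s)})$, with a power saving over $q^2$, for $s \ne 0$.

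I would obtain this from the second moment
\[
\lambda_{\max}(B^{(s)})^2 \le \Tr\bigl((B^{(s)})^2\bigr)
= \sum_{a,a' \in \bbF_q} \bigl| \sum_{x \in \bbF_q} \psi\bigl(s(F(a,x) - F(a',x))\bigr) \bigr|^2 .
\]
The diagonal $a = a'$ contributes $q^3$. For $a \ne a'$, split according to whether $g_{a,a'}(x) := F(a,x) - F(a',x) \in \bbF_q[x]$ is constant. As $F$ is non-diagonal it contains a monomial $a^i x^j$ with $i,j \ge 1$, so the coefficient of $x^j$ in $F$ is a non-constant polynomial in $a$ of degree at most $\deg F - 1$; hence the ``bad'' pairs, those for which $g_{a,a'}$ is constant, number $O_{\deg F}(q)$ and contribute $O_{\deg F}(q^3)$. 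For each of the remaining $\le q^2$ pairs, $g_{a,a'}$ is non-constant of degree at most $\deg F < \mathrm{char}(\bbF_q)$, so the Weil bound applies and gives an inner sum of size $O_{\deg F}(\sqrt q)$, for a total of $O_{\deg F}(q^3)$. Hence $\Tr\bigl((B^{(s)})^2\bigr) \ll_{\deg F} q^3$, so $\lambda_{\max}(B^{(s)}) \ll_{\deg F} q^{3/2}$ and $\sigma \ll_{\deg F} q^{3/4}$, which is comfortably within the required bound.

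The main obstacle is this last, character-theoretic step, and within it the treatment of the bad pairs: one must extract from the single hypothesis that $F$ is non-diagonal the fact that $g_{a,a'}$ is non-constant for all but $O_{\deg F}(q)$ pairs $(a,a')$ --- the coefficient-degree estimate above --- and then apply the Weil bound uniformly over the rest, where the hypothesis $\deg F < \mathrm{char}(\bbF_q)$ ensures that $\deg g_{a,a'}$ is not divisible by the characteristic and hence that the Weil bound is valid. The remaining ingredients --- the regularity of $\Gamma$, the block decomposition of $AA^{\mathrm T}$, and the positive semidefiniteness of $B^{(s)}$ --- are routine.
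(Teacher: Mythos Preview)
Your argument is correct and takes a genuinely different --- and in several respects stronger --- route than the paper's. The paper exploits the symmetry of $F$ to form the undirected graph $\cG_F$, cubes its adjacency matrix, and interprets the entries of $A^3$ as $\bbF_q$-point counts on a family of affine plane curves; the heart of the proof is then Lemma~\ref{zc}, an absolute-irreducibility statement for that family, established via Schinzel's decomposition theorem, the composition lemma of Appendix~\ref{polcomp}, and the classification of additive polynomials, after which the Weil bound for curves gives $\lambda_2(\cG_F)\ll q^{5/6}$. You instead pass to a bipartite model, Fourier-decompose $AA^{\mathrm T}$ along the free $y$-translation, and bound each non-trivial block by the second moment of \emph{one-variable} Weil sums $\sum_x\psi(s(F(a,x)-F(a',x)))$. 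This sidesteps the absolute-irreducibility analysis entirely, uses only the classical Weil bound for additive character sums in one variable, and delivers the sharper exponent $\sigma\ll_{\deg F} q^{3/4}$, hence $q^{3/4}$ in place of $q^{5/6}$ in the incidence estimate. Moreover, your argument never invokes the symmetry of $F$: only the non-diagonality (to force $g_{a,a'}$ non-constant off an $O_{\deg F}(q)$-sized exceptional set) and the bound $\deg F<\mathrm{char}(\bbF_q)$ (to make the one-variable Weil bound applicable) are used. The paper's approach is more geometric and would adapt to settings lacking an additive symmetry to Fourier-analyse; yours is more elementary and quantitatively stronger in the present setting.
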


\subsection{Expanding polynomials in finite fields}

For a polynomial 
$P \in \bbF_q[x_1,\ldots,x_k]$
and sets $A_1, \ldots, A_k \subseteq \bbF_q$, we would usually expect
\[
P(A_1, \ldots, A_k)
:= \{ P(x_1, \ldots, x_k): x_1 \in A_1, \ldots, x_k \in A_k \}
\]
to be much larger than $A_1, \ldots, A_k$, i.e. $P$ should \emph{expand}. There are some algebraic obstructions, e.g. $F$ could be additively or multiplicatively structured, or some $A_i$ could be dense.
The \emph{Elekes--R\'onyai problem} \cite{dZ2018} is to demonstrate expansion away from such obstructions. The problem is also studied over more general fields, and is closely related to the sum--product problem.

In order to quantify the amount of expansion, Tao \cite{Tao} proposed the following hierarchy.
\begin{enumerate}[label=(\roman*)]
\item\emph{Weak asymmetric expansion}: there exist $c, C>0$ such that $$|P(A_1,\ldots,A_k)|\geq C^{-1}
\min\{|A_1|,\ldots,|A_k|\}^{1-c}q^c$$ whenever $|A_1|,\ldots,|A_k|\geq Cq^{1-c}$.
\item\emph{Moderate asymmetric expansion}: there exist $c, C>0$ such that $$|P(A_1,\ldots,A_k)|\geq C^{-1}q$$ whenever $|A_1|,\ldots,|A_k|\geq Cq^{1-c}$.
\item\emph{Almost strong asymmetric expansion}: there exist $c,C>0$ such that $$|P(A_1,\ldots,A_k)|\geq q-Cq^{1-c}$$ whenever $|A_1|,\ldots,|A_k|\geq Cq^{1-c}$.
\item\emph{Strong asymmetric expansion}: there exist $c,C>0$ such that $$|P(A_1,\ldots,A_k)|\geq q-C$$ whenever $|A_1|,\ldots,|A_k|\geq Cq^{1-c}$.
\item\emph{Very strong asymmetric expansion}: there exist $c,C>0$ such that $$P(A_1,\ldots,A_k)=\bbF_q$$ whenever $|A_1|,\ldots,|A_k|\geq Cq^{1-c}$.
\end{enumerate}
The qualifier ``asymmetric'' in the above refers to the fact that the sets $A_1,\ldots,A_k$ are allowed to differ. Typically $q$ is allowed to vary, and $P$ is allowed to vary in some family, meaning that $c,C$ do not depend on them.

In our previous paper, we established almost strong expansion for a Zariski-dense set of polynomials of degree $d$ in $d+1$ variables, strengthening and generalising the main result of \cite{PVZ2019}. We are now able to handle ternary polynomials of more general degree.

\begin{thm}
\label{MainExpansion}
Let $P\in\bbF_q[x,y,z]$ be a polynomial of the form
$$F(x,G(y,z))+H(y,z)+J(x),$$
where $F,G,H\in \bbF_q[x,y]$ and $J\in \bbF_q[x]$ are such that:
\begin{enumerate} 
[label=(\roman*)]
\item $F$ is non-diagonal and symmetric with $\deg(F)<\mathrm{char}(\bbF_q)$, and 
\item $G$ and $H$ are algebraically independent. 
\end{enumerate}
Then there exists a constant $C = C_{\deg(P)}$ such that if $X,Y,Z\subseteq\bbF_q$ and $|X|,|Y|,|Z|\geq Cq^{1/2}$ then
\[
|P(X,Y,Z)| = q-O_{\deg(P)}\left(\frac{q^{11/3}}{|X|\cdot|Y|\cdot|Z|}\right)\text{.}
\]
\end{thm}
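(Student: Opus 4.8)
We plan to control the set of missing values $T := \bbF_q \setminus P(X,Y,Z)$ by producing, for each such value, a family of curves with \emph{no} incidences, and then feeding this into Theorem~\ref{MainThm}. The starting point is a dictionary between the two problems: writing $\Phi_x(s) := F(x,s)+J(x)\in\bbF_q[s]$, the equation $P(x,y,z)=v$ is equivalent to the incidence $(G(y,z),H(y,z))\in\fq_{x,\,J(x)-v}$, with $\fq_{a,b}$ the curve in \eqref{qab}. Accordingly we would set
\[
\cP := \{ (G(y,z),H(y,z)) : y\in Y,\ z\in Z \} \subseteq \bbF_q^2,
\qquad
\cQ := \{ \fq_{x,\,J(x)-v} : x\in X,\ v\in T \}.
\]
Since no triple $(x,y,z)\in X\times Y\times Z$ satisfies $P(x,y,z)\in T$, every point of $\cP$ avoids every curve in $\cQ$, that is,
\[
\# \{ (\fp,\fq)\in\cP\times\cQ : \fp\in\fq \} = 0.
\]
Every member of $\cQ$ is of the shape \eqref{qab}, and $F$ is non-diagonal and symmetric with $\deg(F)<\mathrm{char}(\bbF_q)$ by hypothesis~(i), so Theorem~\ref{MainThm} applies and yields $|\cP|\,|\cQ|/q \ll_{\deg(P)} q^{5/6}\sqrt{|\cP|\,|\cQ|}$, i.e. $|\cP|\,|\cQ| \ll_{\deg(P)} q^{11/3}$.

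It then suffices to show that $|\cP| \gg_{\deg(P)} |Y|\,|Z|$ and $|\cQ| \gg_{\deg(P)} |X|\,|T|$, for combining these with the bound above gives $|X|\,|Y|\,|Z|\,|T| \ll_{\deg(P)} q^{11/3}$, and $q-|P(X,Y,Z)| = |T|$, which is the theorem. The bound on $|\cQ|$ is the easier one: two curves $\fq_{x,J(x)-v}$ and $\fq_{x',J(x')-v'}$ coincide exactly when $v=v'$ and $\Phi_x=\Phi_{x'}$ as polynomials in $s$; non-diagonality of $F$ ensures that some coefficient $f_{i_0}(a)$ of $s^{i_0}$ with $i_0\ge1$ in $F(a,s)=\sum_i f_i(a)s^i$ is non-constant (otherwise $F(a,s)=\phi(a)+\psi(s)$, the degenerate shape excluded from Theorem~\ref{MainThm}), so $\Phi_x=\Phi_{x'}$ forces $f_{i_0}(x)=f_{i_0}(x')$, which has at most $\deg(F)$ solutions $x'$ for each $x$; hence $|\cQ| = |T|\cdot\#\{\Phi_x : x\in X\} \gg_{\deg(P)} |X|\,|T|$. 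For $\cP$, algebraic independence of $G$ and $H$ (hypothesis~(ii)) makes $(G,H)\colon\bbA^2\to\bbA^2$ dominant, hence generically finite of some degree $d=O_{\deg(P)}(1)$; the locus of the target over which the fibre is larger than this is a proper closed subset, whose preimage lies on a plane curve of degree $O_{\deg(P)}(1)$ and so meets $Y\times Z$ in at most $O_{\deg(P)}(q)$ points (a bounded-degree plane curve has $O(q)$ points over $\bbF_q$, even inside a box). Thus $|\cP| \ge d^{-1}\bigl(|Y|\,|Z| - O_{\deg(P)}(q)\bigr)$, and choosing the constant $C=C_{\deg(P)}$ large enough, the hypothesis $|Y|,|Z|\ge Cq^{1/2}$ makes the main term dominate, so $|\cP| \gg_{\deg(P)} |Y|\,|Z|$.

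The conceptual core is the reformulation in the first paragraph: once one sees that the missing values of $P$ assemble into a curve family with zero incidences, Theorem~\ref{MainThm} can essentially be read off. The genuine technical obstacle will be the lower bound $|\cP| \gg |Y|\,|Z|$: a priori $(G,H)$ can collapse a whole curve to a point (for example $(yz,y)$ collapses the line $y=0$), and one must check that such degenerate fibres are few and of dimension at most one, hence remove only $O(q)$ points from $Y\times Z$ — negligible against $|Y|\,|Z| \ge C^2 q$ when $C$ is large. The remaining ingredients — an effective B\'ezout-type bound for the generic fibre size of $(G,H)$, the count of $\bbF_q$-points of a bounded-degree curve inside a box, and the passage from non-diagonality of $F$ to bounded multiplicities among the $\fq_{x,J(x)-v}$ — are routine. (If $T=\emptyset$ there is nothing to prove, so we assume $T\ne\emptyset$ throughout.)
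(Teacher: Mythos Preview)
Your argument is the paper's, with the roles of $\cP$ and $\cQ$ swapped: the paper takes $\cP=\{(x,J(x)-w):x\in X,\ w\in W\}$ and $\cQ=\{\fq_{G(y,z),H(y,z)}:(y,z)\in Y\times Z\}$, which has the minor advantage that $|\cP|=|X|\cdot|W|$ holds on the nose (distinct $(x,w)$ give distinct points), so no non-diagonality argument is needed on that side. One small slip in your version: $\fq_{x,J(x)-v}=\fq_{x',J(x')-v'}$ is equivalent to $\Phi_x-v=\Phi_{x'}-v'$ as polynomials in $s$, not to ``$v=v'$ and $\Phi_x=\Phi_{x'}$'', but your multiplicity bound via the coefficient $f_{i_0}$ survives this correction unchanged.
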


\begin{cor}
Let $\eps > 0 $, let $P$ be a polynomial of the form given in Theorem~\ref{MainExpansion}, and let $X,Y,Z\subseteq\bbF_q$ be such that $|X|,|Y|,|Z|\gg q^{8/9+\varepsilon}$. Then
$$|P(X,Y,Z)|=q-O_{\varepsilon,\deg(P)}(q^{1-3\varepsilon})\text{.}$$
\end{cor}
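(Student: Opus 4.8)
The plan is to deduce this directly from Theorem~\ref{MainExpansion}, the only work being to feed the lower bounds on $|X|,|Y|,|Z|$ into the error term. First I would dispose of small $q$: since $|P(X,Y,Z)| \le q$ trivially, the claimed estimate $|P(X,Y,Z)| = q - O_{\eps,\deg(P)}(q^{1-3\eps})$ holds automatically whenever $q$ is bounded in terms of $\eps$ and $\deg(P)$, by enlarging the implied constant. So I may assume $q$ is as large as convenient in terms of $\eps$ and $\deg(P)$. In particular, because $8/9 + \eps > 1/2$, the hypothesis $|X|,|Y|,|Z| \gg q^{8/9+\eps}$ then forces $|X|,|Y|,|Z| \ge C q^{1/2}$, where $C = C_{\deg(P)}$ is the constant furnished by Theorem~\ref{MainExpansion}; this is the only compatibility check needed between the two statements.

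Having verified its hypotheses, I would apply Theorem~\ref{MainExpansion} to obtain
\[
|P(X,Y,Z)| = q - O_{\deg(P)}\!\left( \frac{q^{11/3}}{|X| \cdot |Y| \cdot |Z|} \right).
\]
Since $|X|,|Y|,|Z| \gg q^{8/9+\eps}$, we have $|X| \cdot |Y| \cdot |Z| \gg q^{8/3+3\eps}$, and hence
\[
\frac{q^{11/3}}{|X| \cdot |Y| \cdot |Z|} \ll q^{11/3 - 8/3 - 3\eps} = q^{1 - 3\eps},
\]
using $11/3 - 8/3 = 1$. This gives the stated bound. I do not anticipate any real obstacle here: all of the substance is in Theorem~\ref{MainExpansion}, and what remains is the reduction to large $q$ together with the exponent arithmetic above, both routine.
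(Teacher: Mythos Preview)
Your proposal is correct and matches the paper's intent: the corollary is stated without proof, as an immediate consequence of Theorem~\ref{MainExpansion}, and your deduction (reduce to large $q$, verify the $Cq^{1/2}$ hypothesis, then do the exponent arithmetic $11/3 - 3(8/9+\eps) = 1 - 3\eps$) is exactly the routine check that is being left to the reader.
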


\begin{cor}
\label{erdcor}
Let $\eps > 0$, let $P(x,y,z)=(x-y)^k+z$, where $k\geq2$ is an integer, and let $q$ be a power of a prime $p > k$. Let $X,Y,Z\subseteq\bbF_q$ be such that $|X|,|Y|,|Z|\gg q^{8/9+\varepsilon}$. Then
$$|P(X,Y,Z)|=q-O_{\varepsilon,k}(q^{1-3\varepsilon})\text{.}$$
\end{cor}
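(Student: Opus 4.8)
The plan is to realise $P(x,y,z)=(x-y)^k+z$ in the form required by Theorem~\ref{MainExpansion} and then to invoke the corollary immediately preceding this one (which is itself a direct consequence of Theorem~\ref{MainExpansion}).

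First I would take
$$F(a,x)=(a+x)^k,\qquad G(y,z)=-y,\qquad H(y,z)=z,\qquad J(x)=0,$$
so that $F\bigl(x,G(y,z)\bigr)+H(y,z)+J(x)=(x-y)^k+z=P(x,y,z)$ and $\deg(P)=\deg(F)=k$. The point of this particular bookkeeping is that $F(a,x)=(a+x)^k$ is \emph{symmetric}, while the sign twist $G(y,z)=-y$ converts it into $(x-y)^k$; writing $F(a,x)=(a-x)^k$ directly would violate the symmetry hypothesis whenever $k$ is odd.

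Next I would verify the hypotheses of Theorem~\ref{MainExpansion}. Symmetry of $F$ is immediate. Since $q$ is a power of a prime $p>k$, we have $\deg(F)=k<p=\mathrm{char}(\bbF_q)$. Non-diagonality holds because $(a+x)^k=\sum_{i=0}^k\binom{k}{i}a^ix^{k-i}$ contains the genuine cross term $\binom{k}{1}a^{k-1}x=k\,a^{k-1}x$, whose coefficient $k$ is nonzero in $\bbF_p$ (in fact every $\binom{k}{i}$ is a unit mod $p$, as $k!$ is coprime to $p$). Finally $G=-y$ and $H=z$ are algebraically independent over $\bbF_q$, since their Jacobian with respect to $(y,z)$ is $\diag(-1,1)$, which is nonsingular; equivalently $\bbF_q[G,H]=\bbF_q[y,z]$ has transcendence degree $2$. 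One should also note that the choices $G=-y$, $H=z$, $J=0$, though they look degenerate, are perfectly admissible: the theorem constrains $G,H$ only through algebraic independence and imposes no lower bound on $\deg J$.

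With the hypotheses in place, the preceding corollary applies verbatim: whenever $|X|,|Y|,|Z|\gg q^{8/9+\eps}$ we obtain
$$|P(X,Y,Z)|=q-O_{\eps,\deg(P)}\bigl(q^{1-3\eps}\bigr)=q-O_{\eps,k}\bigl(q^{1-3\eps}\bigr),$$
using $\deg(P)=k$. There is no substantive obstacle here: the entire content is the observation that $(x-y)^k+z$ matches the template of Theorem~\ref{MainExpansion}, and the only genuine (and trivial) check is the nonvanishing of the relevant binomial coefficients modulo $p$, which is exactly what the hypothesis $p>k$ provides.
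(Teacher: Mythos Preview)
Your proposal is correct and is exactly the verification the paper leaves implicit: realise $(x-y)^k+z$ as $F(x,G(y,z))+H(y,z)+J(x)$ with $F(a,x)=(a+x)^k$, $G=-y$, $H=z$, $J=0$, check the hypotheses of Theorem~\ref{MainExpansion} (symmetry, $\deg F=k<p$, non-diagonality via the nonvanishing cross term $k\,a x^{k-1}$, and algebraic independence of $-y$ and $z$), and then invoke the preceding corollary. There is nothing to add.
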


Aside from the aforementioned results, Shkredov \cite{Shkredov} established very strong expansion for $x^2 + xy + z$. As for higher-degree ternary polynomials, we were previously successful at demonstrating almost strong expansion \cite[Theorem 1.6]{AC} only in the special case
\[
P(x,y,z) = xG(y,z) + H(y,z) + ax^k,
\]
where $G$ and $H$ are algebraically independent. Theorem \ref{MainExpansion} is more general. For example, it handles
\[
P(x,y,z) = (x-y)^k + z,
\]
whereas our previous approach does not reach such polynomials.

\subsection{Methods}

We will see that Theorem \ref{MainExpansion} is easy to deduce from Theorem \ref{MainThm}. To prove Theorem \ref{MainThm}, we use Vinh's spectral framework \cite{Vinh1}, which begins by realising incidences as edges of a regular graph. The main task is to bound the second largest eigenvalue, in absolute value. Vinh is able to use the square of an adjacency matrix to swiftly achieve this in the case of point--line incidences. In our higher-degree setting, we need to cube the matrix, giving rise to a family of algebraic curves on which we need to count $\bbF_q$ points. The Weil bound is suitable for this purpose, as we are able to show that almost all curves in the family are absolutely irreducible.

\subsection*{Organisation}
We will prove Theorem \ref{MainThm} in \S \ref{IncidenceSection}. Then, in \S \ref{applications}, we will deduce Theorem \ref{MainExpansion}.

\subsection*{Notation}

We use the Vinogradov notations $\ll$ and $\gg$, as well as the Bachmann--Landau notation $O(\cdot)$.
Let $f$ and $g$ be complex-valued functions. We write $f \ll g$ or $f=O(g)$ if $|f|\leq C|g|$ pointwise, for some constant $C>0$. We use a subscript within these notations to indicate possible dependence for the implied constant $C$. 

\subsection*{Funding}

NA was funded through the Engineering
and Physical Sciences Research Council Doctoral Training Partnership at the
University of Warwick.

\subsection*{Rights}

For the purpose of open access, the authors have applied a Creative Commons Attribution (CC-BY) licence to any Author Accepted Manuscript version arising from this submission.

\section{An incidence bound}
\label{IncidenceSection}

In this section, we prove Theorem \ref{MainThm}. The first subsection will establish the result assuming a lemma about absolute irreducibility. The latter will then be demonstrated in the second subsection.

\subsection{Spectral theory}
\begin{defn}
For a symmetric polynomial $F\in\bbF_q[x,y]$, we define the graph $\cG_F$ as the graph with set of vertices $\bbF_q^2$ and an edge between $(a,b)$ and $(x,y)$ if and only if
$$F(a,x)+b+y=0\text{.}$$
%When the polynomial $F$ is clear from context, we shall denote $\cG_F$ simply by $\cG$.
\end{defn}
We will prove the following spectral bound for the graphs $\cG_F$.

\begin{lemma}
\label{boundl2}
Let $F\in\bbF_q[x,y]$ be a non-diagonal, symmetric polynomial such that $\deg(F) < \mathrm{char}(\bbF_q)$. Then $\lambda_2(\cG_F)\ll_{\deg(F)}q^{5/6}$.
\end{lemma}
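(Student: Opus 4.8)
The plan is to carry out Vinh's spectral argument using the \emph{third} power of the adjacency matrix, since the degree of $F$ forces us past the quadratic level that suffices for lines. Write $A$ for the adjacency matrix of $\cG_F$, indexed by $\bbF_q^2$, and $J$ for the all-ones matrix of the same size. For each vertex $(a,b)$ and each $x\in\bbF_q$ there is a unique $y$ with $F(a,x)+b+y=0$, so $\cG_F$ is $q$-regular and $A\mathbf{1}=q\mathbf{1}$. Since $A$ is symmetric, $\mathbf{1}^{\perp}$ is $A$-invariant and $J$ vanishes on it, so $A^{3}-qJ$ annihilates $\mathbf{1}$ and acts on $\mathbf{1}^{\perp}$ with eigenvalues $\lambda_i^{3}$, $i\ge 2$; hence, using that the operator norm of a real symmetric matrix is at most its largest absolute row sum,
\[
\lambda_2(\cG_F)^{3}\le\bigl\|A^{3}-qJ\bigr\|_{\mathrm{op}}\le\max_{u}\sum_{v}\bigl|(A^{3})_{uv}-q\bigr|.
\]
It therefore suffices to show that every row sum on the right is $O_{\deg(F)}(q^{5/2})$.

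The next step is to identify $(A^{3})_{uv}$ with a point count. A walk $(a,b)\to(x_1,y_1)\to(x_2,y_2)\to(c,d)$ forces $y_1=-F(a,x_1)-b$ and $y_2=-F(x_2,c)-d$, and the middle incidence $F(x_1,x_2)+y_1+y_2=0$ then becomes
\[
F(x_1,x_2)-F(a,x_1)-F(x_2,c)=b+d.
\]
Thus $(A^{3})_{(a,b),(c,d)}=N(a,c,b+d)$, where $N(a,c,s)$ is the number of $\bbF_q$-points of the affine plane curve $\cC_{a,c,s}$ defined by $G_{a,c,s}(x_1,x_2):=F(x_1,x_2)-F(a,x_1)-F(x_2,c)-s=0$. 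Because $F$ is symmetric and non-diagonal, $F(x_1,x_2)-F(a,x_1)-F(x_2,c)$ is non-constant for every $a,c$ (otherwise $F$ would be diagonal), so $G_{a,c,s}$ is a nonzero polynomial of degree $O_{\deg(F)}(1)$ for \emph{all} $a,c,s$; slicing by vertical lines gives the crude bound $N(a,c,s)\ll_{\deg(F)}q$ always. Fixing $u=(a,b)$ and letting $v=(c,d)$ range over $\bbF_q^{2}$, the pair $(c,b+d)$ runs bijectively over $\bbF_q^{2}$, so
\[
\sum_{v}\bigl|(A^{3})_{uv}-q\bigr|=\sum_{c\in\bbF_q}\ \sum_{s\in\bbF_q}\bigl|N(a,c,s)-q\bigr|.
\]

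The key input — supplied by the absolute-irreducibility lemma of the next subsection — is that for every $a,c\in\bbF_q$ the curve $\cC_{a,c,s}$ is absolutely irreducible for all but $O_{\deg(F)}(1)$ values of $s$. Granting this, I split the double sum: for the $O_{\deg(F)}(q)$ exceptional pairs $(c,s)$ (all $c$, boundedly many $s$ each) the crude bound gives $|N(a,c,s)-q|\ll_{\deg(F)}q$, contributing $O_{\deg(F)}(q^{2})$; for the remaining pairs the Lang--Weil estimate (in the Aubry--Perret form valid for possibly singular affine plane curves of bounded degree) yields $|N(a,c,s)-q|\ll_{\deg(F)}q^{1/2}$, contributing $O_{\deg(F)}(q^{2}\cdot q^{1/2})=O_{\deg(F)}(q^{5/2})$. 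Summing, $\max_{u}\sum_{v}|(A^{3})_{uv}-q|\ll_{\deg(F)}q^{5/2}$, whence $\lambda_2(\cG_F)\ll_{\deg(F)}q^{5/6}$.

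The one genuinely difficult step is the absolute-irreducibility claim, which is why it is isolated into its own subsection: one must show that the one-parameter family $\{G_{a,c,s}\}_{s}$ factors over $\overline{\bbF_q}$ for only boundedly many $s$, \emph{uniformly} in $a$ and $c$. I would expect to prove this by first showing that $G_{a,c,s}$, regarded as a polynomial in $x_1,x_2,s$, is absolutely irreducible for every fixed $a,c$ — this is where the symmetry and non-diagonality of $F$ must really be used — and then invoking the standard fact that specialisations of an absolutely irreducible polynomial at a parameter remain absolutely irreducible outside the vanishing locus of a nonzero resultant/discriminant, which here has degree $O_{\deg(F)}(1)$ in $s$.
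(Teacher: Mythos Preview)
Your overall architecture --- cube the adjacency matrix, interpret $(A^{3})_{uv}$ as the $\bbF_q$-point count on a plane curve, apply the Weil bound, and isolate the hard work into an absolute-irreducibility lemma --- matches the paper exactly. The divergence is in how you pass from $A^{3}=qJ+M$ to a bound on $|\lambda_2|^{3}$, and correspondingly in the strength of the irreducibility input you invoke.

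The paper bounds $|\lambda_2|^{3}$ by the Frobenius norm $\|M\|_2=\bigl(\sum_{\mathbf a}M_{\mathbf a}^{2}\bigr)^{1/2}$ rather than by a maximal row sum. This matters because Lemma~\ref{zc} only asserts that the reducible locus lies in a \emph{proper subvariety} $Z\subset\bbA^{4}$, i.e.\ has $O(q^{3})$ points in total; it gives no row-wise uniformity. With the Frobenius norm one gets $\|M\|_2^{2}\ll q^{3}\cdot q^{2}+q^{4}\cdot q=q^{5}$ directly from $\#Z(\bbF_q)=O(q^{3})$, and nothing finer about $Z$ is needed.

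Your row-sum argument, by contrast, requires that for \emph{every} fixed $(a,c)$ there are only $O_{\deg F}(1)$ bad values of $s$, and this is false in general. Take $F(x,y)=x^{2}y^{2}$ (symmetric, non-diagonal, degree $4$): for $a=c=0$ one has $F(0,x_1)+F(x_2,0)-F(x_1,x_2)=-(x_1x_2)^{2}$, which is composite, so $-(x_1x_2)^{2}+s$ factors over $\overline{\bbF_q}$ for \emph{every} $s$. Your sketched proof of the irreducibility claim also breaks: absolute irreducibility of $G_{a,c,s}$ as a polynomial in the three variables $x_1,x_2,s$ (automatic, since it is linear in $s$) does \emph{not} force specialisations in $s$ to be generically absolutely irreducible --- the standard counterexample is $x^{2}-s$, absolutely irreducible in $\bar k[x,s]$ yet reducible for every $s\in\bar k$. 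What is actually needed is irreducibility over $\overline{\bbF_q(s)}$, equivalently that $F(a,x_1)+F(x_2,c)-F(x_1,x_2)$ is not composite; Lemma~\ref{zc} establishes this only \emph{generically} in $(a,c)$, via Schinzel's theorem and an argument with additive polynomials, and the example above shows one cannot hope for more. The clean fix is simply to replace your $\ell^{\infty}$ row-sum bound by the Frobenius-norm bound, after which the paper's weaker Lemma~\ref{zc} suffices.
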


\begin{proof}
[Proof of Lemma \ref{boundl2} assuming Lemma \ref{zc}]
Let $A$ be the adjacency matrix of $\cG=\cG_F$, the rows and columns of which are indexed by points in $\bbF_q^2$. We consider the matrix $A^3$. 

The entry of $A^3$ in the row corresponding to $(a,b)$ and the column corresponding to $(c,d)$ is the number of paths of length $3$ in $\cG$ connecting $(a,b)$ to $(c,d)$. This is the number of solutions $(x_1,x_2,y_1,y_2)\in\bbF_q^4$ to the system of equations
$$
\begin{cases}
    F(a, x_1) + b + y_1 = 0 \\
    F(x_1, x_2) + y_1 + y_2 = 0 \\
    F(x_2, c) + y_2 + d = 0\text{.}
\end{cases}
$$
Solutions to this system are in bijection with pairs $(x_1,x_2)\in\bbF_q^2$ satisfying
\begin{equation}
\label{curvedef}F(a,x_1)+F(x_2,c)-F(x_1,x_2)+b+d=0\text{,}
\end{equation}
with mutually inverse bijections being given by
$$(x_1,x_2,y_1,y_2)\mapsto(x_1,x_2)$$
and
$$(x_1,x_2)\mapsto(x_1,x_2,-F(a,x_1)-b,-F(x_2,c)-d)\text{.}$$
Let $C_{\mathbf{a}}=C/\bbF_q$ be the affine plane curve defined in coordinates $(x_1,x_2)$ by \eqref{curvedef}, where $\mathbf{a}=(a,b,c,d)$, so that, as argued above, $$A^3_{(a,b),(c,d)}=\#C(\bbF_q)\text{.}$$ 
Observe that \eqref{curvedef} indeed defines a curve; the polynomial on the left-hand side of \eqref{curvedef} does not vanish, since $F$ is assumed not to be diagonal. 

By the Weil bound, 
$$\#C(\bbF_q)=\begin{cases}
q+O(\sqrt{q})&\text{ if $C$ is absolutely irreducible}\\
O(q)&\text{ otherwise.}
\end{cases}
$$
Denote by $U$ the $q^2\times q^2$ matrix all the entries of which equal $1$. The upshot is that
$$A^3=qU+M \text{,}$$
where $M$ is a $q^2\times q^2$ real matrix, the entries of which are indexed by points $\mathbf{a}\in\bbF^4$, such that
\[
M_{\mathbf{a}} = \begin{cases}
O(\sqrt{q}) &\text{ if $C_\mathbf{a}$ is absolutely irreducible} \\
O(q) &\text{ otherwise.}
\end{cases}
\]
Lemma \ref{zc}, which we will prove in \S \ref{absirr}, asserts that the points $\mathbf{a}$ such that $C_\mathbf{a}$ is not absolutely irreducible are contained in a proper subvariety $Z \subset \bbA^4$.

We have $\lambda_1(\cG)=q$, since $\cG$ is clearly $q$-regular. Note that the eigenspace corresponding to the eigenvalue $1$ is spanned by the vector $\mathbf{u}=(1,\ldots,1)$. Since $A$ is symmetric, the spectral theorem implies that
$$|\lambda_2(\cG)|^3=\max\{\lambda\in\bR:\|A^3\mathbf{x}\|_2\leq\lambda\|\mathbf{x}\|_2\text{ for any }\mathbf{x}\text{ with }\mathbf{x}\cdot\mathbf{u}=0\}\text{.}$$
For $\mathbf{x}$ with $\mathbf{x}\cdot\mathbf{u}=0$, we have $U\mathbf{x} = \bzero$ and hence
\[
A^3\mathbf{x}=(qU+M)\mathbf{x}=M\mathbf{x}\text{.}
\]
Now
$$\|A^3\mathbf{x}\|_2=\|M\mathbf{x}\|_2\leq\|M\|_2\|\mathbf{x}\|_2\text{,}$$
by Cauchy-Schwarz, where
\[
\|M\|_2 = \sqrt{\sum_{\mathbf{a}\in\bbF_q^4} M_\mathbf{a}^2}\text{.}
\]
It follows that
\begin{equation}
\label{bdl2}|\lambda_2(\cG)|\leq\left(\sum_{\mathbf{a}\in\bbF_q^4}M_\mathbf{a}^2\right)^{1/6}\text{.}
\end{equation}

Using that $\#Z(\bbF_q)=O(q^3)$, since $Z$ is a proper subvariety of $\bbA^4$,
\begin{align*}\sum_{\mathbf{a}\in\bbF_q^4}M_\mathbf{a}^2&=\sum_{\mathbf{a}\in Z(\bbF_q)}M_\mathbf{a}^2+\sum_{\mathbf{a}\notin Z(\bbF_q)}M_\mathbf{a}^2\\
&=O(q^3\cdot q^2)+O(q^4\cdot q)=O(q^5)\text{.}
\end{align*}
Inserting this into \eqref{bdl2} completes the proof.
\end{proof}

\begin{proof}[Proof of Theorem \ref{MainThm} assuming Lemma \ref{zc}]
The result follows immediately by applying the expander mixing lemma \cite[Corollary 9.2.5]{AlonSpencer} to the graph $\cG_F$, together with Lemma \ref{boundl2}.
\end{proof}

\subsection{Absolute irreducibility}
\label{absirr}

In this subsection, we will establish the following lemma used in the proof of Lemma \ref{boundl2}. 

\begin{lemma}
\label{zc}
Let $F\in\bbF_q[x,y]$ be a non-diagonal, symmetric polynomial such that $\deg(F) < \mathrm{char}(\bbF_q)$.
Then there exists a proper subvariety $Z\subset \bbA^4$ of degree $O_{\deg(F)}(1)$ such that, for $\mathbf{a}=(a,b,c,d)\notin Z(\bbF_q)$, the curve $C_\mathbf{a}$ defined by \eqref{curvedef} is absolutely irreducible.
\end{lemma}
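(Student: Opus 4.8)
The plan is to reduce the lemma to the absolute irreducibility of the \emph{generic} member of the family, and then to establish that by a leading‑coefficient computation in which the symmetry of $F$ plays an essential role.

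First, granting the generic statement, the bad locus is a proper subvariety of controlled degree. The polynomial cutting out $C_{\mathbf a}$ is $\Phi_{\mathbf a}(x_1,x_2)=F(a,x_1)+F(c,x_2)-F(x_1,x_2)+b+d$ (using $F(x_2,c)=F(c,x_2)$); its coefficients are polynomials in $\mathbf a=(a,b,c,d)$ of degree $O_{\deg F}(1)$, and $\deg_{(x_1,x_2)}\Phi_{\mathbf a}\le\deg F$. By an effective form of Noether's irreducibility theorem --- e.g.\ Ruppert's criterion, which detects absolute irreducibility of a bounded‑degree bivariate polynomial as the maximality of the rank of a matrix whose entries are linear in its coefficients --- together with the (likewise closed, bounded‑degree) loci where $\Phi_{\mathbf a}$ drops degree or fails to be squarefree, the set of $\mathbf a\in\bbA^4$ for which $C_{\mathbf a}$ is not absolutely irreducible is a Zariski‑closed set $Z$, defined over $\bbF_q$, of degree $O_{\deg F}(1)$. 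It suffices to show $Z\neq\bbA^4$, i.e.\ that $C_{\mathbf a}$ is absolutely irreducible when $\mathbf a$ is the generic point: writing $\kappa_0:=\overline{\bbF_q(a,c,d)}$, $\Phi_0:=F(a,x_1)+F(c,x_2)-F(x_1,x_2)$, and $e:=b+d$ (a transcendental over $\kappa_0$), this is the assertion that $\Phi_0+e$ is absolutely irreducible over $\overline{\kappa_0(e)}=\overline{\bbF_q(a,b,c,d)}$.

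Since $\Phi_0+e$ is affine‑linear in $e$ and primitive in $(x_1,x_2)$ over $\kappa_0[e]$, Gauss's lemma makes it irreducible over $\kappa_0(e)$; moreover the curve $\{\Phi_0+e=0\}$ over $\kappa_0(e)$ is the generic fibre of the morphism $\Phi_0\colon\bbA^2\to\bbA^1$ (one has $e=-\Phi_0$ on it). Were it not absolutely irreducible, the generic --- hence, by constructibility, almost every --- fibre of $\Phi_0$ would be geometrically reducible, so by a classical reducibility criterion of Bertini--Krull type (Stein's theorem; see also Schinzel's monograph) $\Phi_0$ would be \emph{composite}: $\Phi_0=h(P)$ with $P\in\kappa_0[x_1,x_2]$ and $h\in\kappa_0[t]$ of degree $s\ge2$. (The separability these invocations require is supplied by $\deg F<\mathrm{char}(\bbF_q)$.) So the lemma reduces to showing $\Phi_0$ is not composite. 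Write $F(x,y)=\sum_{i,j}c_{ij}x^iy^j$ with $c_{ij}=c_{ji}$, and set $\phi_i(t):=\sum_j c_{ij}t^j$; by symmetry $F(a,x_1)=\sum_j\phi_j(a)x_1^j$, so for $i\ge1$ the coefficient of $x_1^i$ in $\Phi_0$ is $\phi_i(a)-\phi_i(x_2)$. Non‑diagonality of $F$ forces some $c_{ij}$ with $i,j\ge1$ to be nonzero, hence some $\phi_i$ ($i\ge1$) to be non‑constant; taking $N\ge1$ maximal with this property, $\deg_{x_1}\Phi_0=N$ with leading coefficient $\phi_N(a)-\phi_N(x_2)$. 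If $\Phi_0=h(P)$ with $s\ge2$, then $\deg_{x_1}\Phi_0=s\,\deg_{x_1}P$ and, comparing leading $x_1$‑coefficients,
\[
\phi_N(a)-\phi_N(x_2)=h_s\bigl(\mathrm{lc}_{x_1}(P)\bigr)^{s},
\]
where $h_s\ne0$ and $\mathrm{lc}_{x_1}(P)\in\kappa_0[x_2]$ has positive degree (the left side has $x_2$‑degree $\deg\phi_N\ge1$). Thus $\phi_N(a)-\phi_N(x_2)$ would be, up to a constant, an $s$‑th power with $s\ge2$, hence have a repeated root in $x_2$; but it does not, for generic $a$, since $\phi_N'\not\equiv0$ (as $1\le\deg\phi_N\le\deg F<\mathrm{char}(\bbF_q)$), so the finitely many critical values of $\phi_N$ lie in $\overline{\bbF_q}$, while $\phi_N(a)\notin\overline{\bbF_q}$ because $a$ is transcendental and $\phi_N$ non‑constant. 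This contradiction shows $\Phi_0$ is not composite.

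I expect the main obstacle to be not this last computation --- which is short and robust --- but the first step: selecting and stating a clean effective‑irreducibility input, and handling the degenerate $\mathbf a$ (degree drops, repeated factors) so that the bad locus genuinely forms a closed subvariety of degree $O_{\deg F}(1)$. The conceptual core is the leading‑coefficient identity displayed above, where symmetry enters by aligning the coefficients of $F(a,x_1)$ with those of $F(x_1,x_2)$ as polynomials in $x_1$; I would organise the write‑up around it.
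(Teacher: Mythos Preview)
Your argument is correct and takes a genuinely different, more economical route to the same conclusion than the paper does. Both proofs share the opening moves: the locus of bad $\mathbf a$ is Zariski--closed of degree $O_{\deg F}(1)$ by an effective Noether--type input, and Schinzel's criterion reduces properness of $Z$ to showing that $\Phi_0=F(a,x_1)+F(c,x_2)-F(x_1,x_2)$ is not composite. From there the two arguments diverge. The paper specialises $a=0$, works with the one--parameter family in $c$, and compares two compositions $P_p\circ Q_p=P_{p'}\circ Q_{p'}$ via an auxiliary lemma on uniqueness of inner composition factors (Lemma~\ref{polcomp1}); this eventually forces $P_p$ to be additive, whence Lemma~\ref{polcomp3} and the hypothesis $\deg F<\mathrm{char}(\bbF_q)$ give the contradiction. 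You instead keep $a$ generic and observe that symmetry makes the $x_1^i$--coefficient of $\Phi_0$ equal to $\phi_i(a)-\phi_i(x_2)$; compositeness would then force $\phi_N(a)-\phi_N(x_2)$ to be an $s$--th power, hence to have a repeated root, which is impossible for $a$ transcendental since $\phi_N'\not\equiv 0$. Your approach is shorter and avoids both auxiliary lemmas; the paper's approach, by contrast, works after specialising $a$ and so in principle says something about specific (not merely generic) choices of $a$, though that extra information is not needed here. One small point worth making explicit in a write--up: your claim $\deg_{x_1}\Phi_0=N$ relies on the observation that for $i>N$ the coefficient $\phi_i(a)-\phi_i(x_2)$ vanishes identically (because $\phi_i$ is constant), which you state but might spell out, and you should note that $\deg_{x_1}P\ge 1$ since $\deg_{x_1}\Phi_0=N\ge 1$.
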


To this end, we will require the following auxiliary result, the proof of which we defer to Appendix \ref{polcomp}.

\begin{lemma}
\label{polcomp1}
Let $k$ be a field and let $P,Q,R,S\in k[x]$ be polynomials such that $0 < \deg(P)=\deg(R)<\mathrm{char}(k)$ and the compositions $P\circ Q$ and $R\circ S$ are equal. Then there exists a linear polynomial $L \ne 0$ such that $S=L\circ Q$.
\end{lemma}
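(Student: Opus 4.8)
\emph{Proof proposal.} The plan is to normalise so that $Q$ and $S$ are monic, and then to force $T := S - Q$ to be constant by a degree comparison inside a Taylor expansion; once $T = \tau$ is constant, $L(x) = x + \tau$ is the desired linear map, and $L \ne 0$.

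First I would dispose of degenerate cases and normalise. If $Q$ is constant then, since $\deg(R) \ge 1$, the identity $P\circ Q = R\circ S$ forces $S$ to be constant too and $L(x) = x + (S - Q)$ works; so I may assume $Q$ is non-constant, and comparing degrees in $P\circ Q = R\circ S$ (with $\deg(P) = \deg(R) =: n \ge 1$) then gives $\deg(Q) = \deg(S) =: m \ge 1$. Next, letting $c$ and $d$ be the leading coefficients of $Q$ and $S$, I would replace $Q$ by $Q/c$, $S$ by $S/d$, $P(x)$ by $P(cx)$ and $R(x)$ by $R(dx)$; this leaves the hypotheses and the identity $P\circ Q = R\circ S$ intact, and a linear polynomial realising $S = L\circ Q$ after the rescaling produces, upon undoing it, such a linear polynomial before it. Hence I may assume $Q$ and $S$ are monic, and then comparing leading coefficients in $P\circ Q = R\circ S$ shows that $P$ and $R$ have equal leading coefficients, so $\deg(P - R) \le n - 1$.

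For the main step I would set $T := S - Q$, note $\deg(T) \le m - 1$, and suppose for contradiction that $t := \deg(T) \ge 1$. Since $\deg(R) = n < \mathrm{char}(k)$, the Taylor expansion $R(Q + T) = \sum_{j=0}^{n} R^{(j)}(Q)\, T^j/j!$ is valid; subtracting $R(Q)$ from both sides of $P(Q) = R(S)$ gives
\[
(P - R)(Q) \;=\; R'(Q)\,T \;+\; \sum_{j=2}^{n} \frac{R^{(j)}(Q)}{j!}\,T^j .
\]
Because $\mathrm{char}(k) > n$, every $R^{(j)}$ with $0 \le j \le n$ has degree exactly $n - j$ and nonzero leading coefficient, so $R'(Q)T$ has degree $(n - 1)m + t$, while for $2 \le j \le n$ the summand $R^{(j)}(Q)T^j/j!$ has the strictly smaller degree $(n - j)m + jt$, the gap being $(j - 1)(m - t) > 0$ since $t \le m - 1$. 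Thus $R'(Q)T$ is the unique summand of top degree and cannot be cancelled, so the right-hand side has degree exactly $(n - 1)m + t$, which lies strictly between the consecutive multiples $(n - 1)m$ and $nm$ of $m$ and hence is not a multiple of $m$. But the left-hand side is either $0$ or has degree $\deg(P - R)\cdot m \le (n - 1)m$, a multiple of $m$. This contradiction forces $T$ to be constant, completing the proof.

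I expect the only part needing genuine care to be the degree bookkeeping in the displayed identity: one must use $\mathrm{char}(k) > n$ to keep every $R^{(j)}$ of full degree (so that $R'(Q)T$ is the unique top-degree summand), and then observe that ``the right-hand side has degree not divisible by $m$ while the left-hand side has degree divisible by $m$'' really is a contradiction. The rescaling reduction and the degenerate case where $Q$ is constant are routine.
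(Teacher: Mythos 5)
Your proposal is correct and is essentially the paper's argument: both write $S$ as (a scalar multiple of) $Q$ plus a lower-degree remainder, assume the remainder is non-constant, expand the composition so that the dominant term involving the remainder is (a nonzero multiple of) $Q^{n-1}T$ — nonzero precisely because $\mathrm{char}(k)>\deg R$ — and reach a contradiction because its degree $(n-1)m+\deg T$ is not divisible by $m=\deg Q$ while the remaining terms form a polynomial in $Q$ whose degree is. Your monic normalisation, explicit Taylor expansion, and handling of the constant-$Q$ case are just slightly more detailed packaging of the same computation.
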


We will also require the following characterisation of additive polynomials \cite[Chapter 1]{Gos1998}.

\begin{lemma}
\label{polcomp3}
Let $k$ be an infinite field of finite characteristic $p$, and let $P\in k[X]$ satisfy
\[
P(x+y) = P(x) + P(y)
\qquad (x, y \in k).
\]
Then 
\[
P = \sum_{j=0}^K
a_j X^{p^j}
\]
for some $a_0, \ldots, a_K \in k$.
\end{lemma}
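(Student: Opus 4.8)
The plan is to turn the functional equation into a genuine polynomial identity and then extract the permitted exponents using Lucas's theorem. First I would use that $k$ is infinite: a polynomial over $k$ that vanishes identically as a function on $k$ in each variable must be the zero polynomial, so applying this to $P(X+Y)-P(X)-P(Y)\in k[X,Y]$ upgrades the hypothesis to the identity $P(X+Y)=P(X)+P(Y)$ in $k[X,Y]$. Setting $X=Y=0$ gives $P(0)=0$, so we may write $P=\sum_{i\ge 1}c_iX^i$.

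Next I would expand via the binomial theorem:
\[
0 = P(X+Y)-P(X)-P(Y) = \sum_{i\ge 2} c_i \sum_{a=1}^{i-1}\binom{i}{a}X^aY^{i-a},
\]
the $i=1$ contribution and all pure-power monomials vanishing automatically. Comparing the coefficient of each monomial $X^aY^{i-a}$ with $1\le a\le i-1$ shows that whenever $c_i\ne 0$ with $i\ge 2$, the integer $\binom{i}{a}$ is divisible by $p$ for all $1\le a\le i-1$.

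Finally I would invoke the classical criterion — immediate from Lucas's theorem, or from Kummer's count of carries — that $p\mid\binom{i}{a}$ for every $0<a<i$ precisely when $i$ is a power of $p$: writing $i$ in base $p$, if it has a nonzero digit other than a single leading $1$ then some $\binom{i}{a}$ is a product of digit-binomials that is nonzero mod $p$, whereas for $i=p^j$ every intermediate binomial coefficient is divisible by $p$. Hence $c_i=0$ unless $i\in\{1,p,p^2,\dots\}$, and setting $a_j=c_{p^j}$ for $0\le j\le K$, where $p^K$ is the largest power of $p$ not exceeding $\deg P$, yields $P=\sum_{j=0}^K a_jX^{p^j}$. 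The argument is essentially routine; the only points deserving care are the use of $|k|=\infty$ to pass from the pointwise relation to the polynomial identity, and the precise divisibility statement for binomial coefficients, both of which are standard.
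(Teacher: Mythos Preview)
Your argument is correct. The passage from the pointwise identity to a polynomial identity in $k[X,Y]$ is justified exactly as you say (since $k$ is infinite), the binomial expansion and coefficient comparison are routine, and the characterisation of those $i$ with $p\mid\binom{i}{a}$ for all $0<a<i$ via Lucas/Kummer is the standard one.

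As for comparison with the paper: the paper does not actually prove this lemma --- it is stated with a citation to Goss's \emph{Basic structures of function field arithmetic} and used as a black box. Your write-up therefore supplies a short self-contained proof where the paper defers to the literature. The approach you give (binomial expansion plus Lucas) is in fact the classical textbook argument for this characterisation of additive polynomials, so there is no tension with the cited source.
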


\begin{proof}
[Proof of Lemma \ref{zc}]
We begin by observing that reducibility of $C_\mathbf{a}$ can be characterised in terms of polynomial equations in the coefficients of the polynomial defining $C_{\mathbf{a}}$, see \cite[Chapter V, Theorem 2A]{Sch1976}. Thus, there exists a subvariety $Z \subset \bbA^4$ of degree $O_{\deg(F)}(1)$ such that $C_\mathbf{a}$ is absolutely irreducible if and only if $\mathbf{a}\notin Z$. It remains to prove that $Z$ is proper, in other words, that $C_\mathbf{a}$ is irreducible over $\overline{\bbF_q}$ for some $\mathbf{a} \in \overline{\bbF_q}^4$. Henceforth, we assume for a contradiction that $C_\mathbf{a}$ is reducible over $\overline{\bbF_q}$ for every $\mathbf{a} \in \overline{\bbF_q}^4$.

Our assumption implies that the polynomial
$$F(a,x_1)+F(x_2,c)-F(x_1,x_2)+t$$
is reducible for any $a,c,t\in\overline{\bbF_q}$. By \cite[\S3.3, Corollary 1]{Sch2000}, this forces the polynomial $F(a,x_1)+F(x_2,c)-F(x_1,x_2)$ to be composite --- namely it has the form $P(Q(x_1,x_2))$ with $\deg(P)\geq2$ --- for any $a,c\in\overline{\bbF_q}$. Specialising $a=0$, we deduce that the polynomial 
\[
F(0,x_1)+F(x_2,c)-F(x_1,x_2) \in \bbF_q(c)[x_1,x_2]
\]
has the form $P(Q(x_1,x_2))$, for some polynomials $P$ and $Q$ with coefficients in $\overline{\bbF_q(c)}$ and $\deg(P)\geq 2$. 

Note that the coefficients of $P$ and $Q$ lie in some finite extension of $\bbF_q(c)$, say
\[
K = \bbF_q(c, t_1, \ldots, t_r),
\]
where $P_i(c, t_i) = 0$ for some $P_1, \ldots, P_r \in \bbF_q[x,y]$. These define a variety
\[
X = \{ P_i(c, t_i) = 0 \} \subset \bbA^{r+1}_{c,t_1,\ldots,t_r}
\]
over $\bbF_q$, which has to be a curve, since its function field $K$ has transcendence degree $1$.
This gives rise to a polynomial identity
\begin{equation}
\label{compid}
F(0,x_1)+F(x_2,\pi(p))-F(x_1,x_2)=P_p(Q_p(x_1,x_2)) \text{,}
\end{equation}
where $\pi:X\to\bbA^1$ is a non-constant morphism, and $P_p,Q_p$ are polynomials whose coefficients are algebraically parametrised by a point $p\in X$.

For $p,p'\in X$ with 
\[
F(x, \pi(p)) \ne 
F(x, \pi(p'))
\in \overline{\bbF_q}[x],
\]
the above yields
\begin{equation}
\label{difference}
F(x_2,\pi(p))-F(x_2,\pi(p'))=P_p(Q_p(x_1,x_2))-P_{p'}(Q_{p'}(x_1,x_2))\text{.}
\end{equation}
If we fix $p,p'$ and regard the above as an identity of polynomials in the variable $x_1$ with coefficients in $\overline{\bbF_q}(x_2)$, then we have the equality of polynomial compositions
\[
(P_{p'}+F(x_2,\pi(p))-F(x_2,\pi(p')))\circ Q_{p'}(\cdot,x_2)=P_p\circ Q_p(\cdot,x_2)\text{.}
\]

For generic $p,p'\in X$, the degrees in $x_1$ of 
\[
P_p, \qquad
P_{p'}+F(x_2,\pi(p))-F(x_2,\pi(p'))
\]
are equal. Now Lemma \ref{polcomp1} implies that for generic --- and hence for all --- $p,p'\in X$ the polynomials $Q_{p'}(\cdot,x_2)$ and $Q_p(\cdot,x_2)$ are related by left-composition with a linear polynomial with coefficients in $\overline{\bbF_q}(x_2)$. In other words, the family of polynomials $Q_p(x_1,x_2)$ has the form
\[
Q_p(x_1,x_2)=f_p(x_2)h(x_1,x_2)+g_p(x_2) \text{,}
\]
for some families of polynomials $f_p,g_p\in \overline{\bbF_q}[x_2]$ with $f_p \ne 0$, and some $h\in\bbF_q[x_1,x_2]$. We assume as we may that the polynomials $f_p$ do not have a common factor, for such a factor can be absorbed into $h$.

Inserting the above into \eqref{difference} yields
\begin{align*}
&F(x_2,\pi(p))-F(x_2,\pi(p'))\\
&=P_p(f_p(x_2)h(x_1,x_2)+g_p(x_2))-P_{p'}(f_{p'}(x_2)h(x_1,x_2)+g_{p'}(x_2))\text{.}
\end{align*}
Since $h$ is non-constant --- as otherwise the left-hand side of \eqref{compid} would not depend on $x_1$, which is clearly a contradiction --- we obtain the polynomial identity
\[
F(x_2,\pi(p))-F(x_2,\pi(p'))=P_p(f_p(x_2)z+g_p(x_2))-P_{p'}(f_{p'}(x_2)z+g_{p'}(x_2))\text{.}
\]
Let $m\geq2$ be the generic degree of $P_p$, and let $a_p$ be its leading coefficient. Comparing leading coefficients in $z$ yields
\[
a_pf_p(x_2)^m=a_{p'}f_{p'}(x_2)^m\text{,}
\]
which implies that any two $f_p$ and $f_{p'}$ are related by multiplication by a root of unity. Since the $f_p$ were assumed not to share a common factor, this implies that $(f_p)_p$ is a family of constant polynomials. We thus obtain
\[
P_p\left(f_pz+g_p(x_2)\right)-P_{p'}(f_{p'}z+g_{p'}(x_2))=F(x_2,\pi(p))-F(x_2,\pi(p'))\text{.}
\]

We now make the variable change $w=f_{p'}z+g_{p'}(x_2)$ and observe that
$$P_p \left(
\frac{f_p}{f_{p'}}(w-g_{p'}(x_2))
\right) = P_{p'}(w)+F(x_2,\pi(p))-F(x_2,\pi(p'))\text{.}$$
The right-hand side is a polynomial in $x_2$ and $w$ where no monomial involving both $x_2$ and $w$ shows up with a non-zero coefficient. On the other hand, the left-hand side is of the form
\[
P_p(aw + \ell(x_2)) \text{,}
\]
where $\ell$ is non-constant.
This expands as 
\[
P_p(aw) + P_p(\ell(x_2)) + C + M(w,x_2)
\]
for some constant $C$, where $M(w,x_2)$ is a linear combination of monomials involving both $w$ and $x_2$. We must have $M(w,x_2)=0$, so $P_p$ satisfies the identity 
\[
P_p(x+y)=P_p(x)+P_p(y)+C
\text{.} 
\]

The upshot is that the polynomial $P_p+C$ satisfies the assumption of Lemma \ref{polcomp3}. Therefore, either $\deg(P_p) = 1$ or $\deg(P_p) \ge \mathrm{char}(\bbF_q)$. The latter would contradict our assumption on the degree of $F$, in light of \eqref{compid}. Therefore $P_p$ has degree $1$, which contradicts our assumption that it has degree at least $2$.
\end{proof}

\section{Expanding polynomials}
\label{applications}

In this section, we prove Theorem \ref{MainExpansion}.
We begin by showing that
$$\#\{(G(y,z),H(y,z)):(y,z)\in Y\times Z\}\gg |Y|\cdot|Z|\text{.}$$
To this end, we argue as follows. Identify $\bbF_q^2$ with affine space $\bbA_{\bbF_q}^2$. The condition that $G$ and $H$ are algebraically independent implies that the map $\varphi$ given by $\varphi(y,z)=(G(y,z),H(y,z))$ is dominant, i.e. its image is not contained in a proper subvariety of $\bbA_{\bbF_q}^2$. Therefore, if we denote by $k=O(1)$ the degree of $\varphi$, there exists a proper subvariety $\cX \subseteq \bbA_{\bbF_q}^2$, of degree $O(1)$ by \cite[Lemma 3.1]{AC}, such that $\varphi^{-1}(\mathbf{a})$ is finite whenever $\mathbf{a} \notin \cX$. Since $\varphi$ is at most $k$-to-$1$ outside $\varphi^{-1}(\cX)$, it follows that
\begin{align*}
|\varphi(Y\times Z)|&\geq|\varphi(\bbA_{\bbF_q}^2\setminus\varphi^{-1}(\cX))(\bbF_q)\cap (Y\times Z))|\\
&\geq\frac{1}{k}|(\bbA_{\bbF_q}^2\setminus\varphi^{-1}(\cX))(\bbF_q)\cap (Y\times Z)|\\
&\geq\frac{1}{k}(|Y|\cdot|Z|-|\varphi^{-1}(\cX)(\bbF_q)|)\\
&=\frac{1}{k}(|Y|\cdot|Z| + O(q))\text{.}
\end{align*}
In the final line we used that $\varphi^{-1}(\cX)$, being a proper subvariety of $\bbA_{\bbF_q}^2$, has dimension at most $1$, and its degree is bounded in terms of the degrees of $\cX$ and $\varphi$. It is now clear that, for sufficiently large $C$, if $|Y|,|Z|\geq Cq^{1/2}$, then the above is at least $c|Y|\cdot|Z|$ for a suitably small constant $c > 0$.

Recall \eqref{qab}, and consider the set
$$\cQ=\{\fq_{G(y,z),H(y,z)}:(y,z)\in Y\times Z\}\text{.}$$
We just proved that $|\cQ|\gg|Y|\cdot|Z|$. Define also $W=\bbF_q\setminus P(X,Y,Z)$, and set
$$\cP = \{(x, J(x)-w): x \in X, w \in W \}
\text{,}$$
noting that $|\cP|=|X|\cdot|W|$. It follows from the definitions of $\cP$, $\cQ$, $P$ and $W$ that if $\fp\in\cP$ and $\fq\in\cQ$ then $\fp\notin\fq$. Therefore Theorem \ref{MainThm} yields
$$\frac{|\cP|\cdot|\cQ|}{q}=O(q^{5/6}\sqrt{|\cP|\cdot|\cQ|})\text{,}$$
whence
$$|\cP|\cdot|\cQ|\ll q^{11/3}\text{.}$$
On using that 
\[
|\cQ|\gg |Y|\cdot|Z|, \quad
|\cP|=|X|\cdot|W|, 
\quad
|W|=q-|P(X,Y,Z)|,
\]
the result follows.

\iffalse
\begin{proof}[Proof of Theorem \ref{ErdosProblem}]
In the case $d=2$, the result follows directly by applying Corollary \ref{erdcor} with $X=Y=A$ and 
\[
Z=(A-A)^k := \{ (a - b)^k: a,b \in A \},
\]
noting that $|Z|\gg|A|$, and with $\varepsilon$ replaced by $1/36+\varepsilon$.

In the case $d\geq3$, the $d=2$ case yields 
\[
|(A-A)^k+(A-A)^k| = q-O(q^{11/12-3\varepsilon}),
\]
and we note that $|(A-A)^k|\gg|A|\geq q^{11/12+\varepsilon}$.
Now the result follows from the classical result that if $G$ is a finite abelian group and $S, T \subseteq G$ with $|S|+|T|>|G|$ then $S+T=G$, see for instance \cite{EKP2003}.
Indeed, if $q$ is large enough then
$$q-O(q^{11/12-3\varepsilon})+q^{11/12+\varepsilon}>q\text{,}$$
implying from the above that $(A-A)^k+(A-A)^k+(A-A)^k=\bbF_q$, and the result follows.
\end{proof}
\fi

\appendix

\section{On polynomial composition}
\label{polcomp}
Here we prove Lemma \ref{polcomp1}. A variant of this featured as Problem 5 at the 14th edition of the Romanian Masters of Mathematics, and the proof that we present here is adapted from the official solutions to that problem.

\begin{proof}[Proof of Lemma \ref{polcomp1}]
Since $P\circ Q=R\circ S$, one has 
\[
\deg(P)\deg(Q)=\deg(R)\deg(S)
\]
which, since $\deg(P)=\deg(R)$, implies that $\deg(Q)=\deg(S)$. Let $d$ be this common degree. If $\alpha$ and $\beta$ denote the leading coefficients of $Q$ and $S$, respectively, and $c=\alpha\beta^{-1}$, then $S-cQ$ is a polynomial of degree smaller than $d$. If this polynomial is constant, we are done. So we assume henceforth that it is non-constant, i.e. that $S=cQ+M$ where $M$ is a polynomial with $0 < \deg(M) < d$.

We then have
$$R(cQ(x)+M(x))=P(Q(x))\text{,}$$
and the left-hand side expands as a linear combination of products of powers of $Q(x)$ and $M(x)$. Among the terms that contain positive powers of $M(x)$, the one with the largest degree is $\gamma rc^{r-1}Q(x)^{r-1}M(x)$, where $r$ is the common degree of $R$ and $P$ and $\gamma$ is the leading coefficient of $R$. Recall that $0 < r < \mathrm{char}(k)$. Therefore $rc^{r-1}Q^{r-1}M$ differs  from $T\circ Q$ by a polynomial of lower degree, for some polynomial $T$, and in particular 
$$\deg(Q^{r-1}M)=\deg(T\circ Q)\text{.}$$

The degree of the right-hand side is divisible by $d$, whereas the degree of the left-hand side is $d(r-1)+\deg(M)$, which is not divisible by $d$ because $0<\deg(M)<d$. This contradiction completes the proof of the lemma.
\end{proof}

\providecommand{\bysame}{\leavevmode\hbox to3em{\hrulefill}\thinspace}

\end{document}